\documentclass[12pt]{amsart}

\usepackage[margin=1in]{geometry}
\usepackage{amsmath,amssymb,amsthm,mathtools}
\usepackage[T1]{fontenc}
\usepackage{lmodern}
\usepackage{microtype}
\usepackage{enumitem}
\usepackage{graphicx}
\usepackage{booktabs}
\usepackage{longtable}
\usepackage{array}
\usepackage{tikz}
\usepackage{float}
\usepackage{hyperref, comment}
\allowdisplaybreaks
\usetikzlibrary{arrows.meta,calc}

\hypersetup{
  colorlinks=true,
  linkcolor=blue,
  urlcolor=blue,
  citecolor=blue
}

\newtheorem{theorem}{Theorem}
\newtheorem{lemma}{Lemma}
\newtheorem{remark}{Remark}
\newtheorem{definition}{Definition}

\newcommand{\Nzero}{\mathbb N_0}

\newcommand{\DD}{\mathbb D}
\newcommand{\CC}{\mathbb C}
\newcommand{\HH}{\mathcal H}
\newcommand{\RR}{\mathbb R}

\title{The normalized orbit of a bounded normal operator can be a frame}

\author{Ilya A. Krishtal, G\"otz E. Pfander}
\address{Department of Mathematical Sciences, Northern Illinois University, DeKalb, IL, USA\\
email: ikrishtal@niu.edu}
\address{Lehrstuhl Wissenschaftliches Rechnen, Mathematisch-Geographische Fakult\"at, Katholische Universit\"at Eichst\"att Ingolstadt, Germany\\
email: pfander@ku.de}
\date{June 18, 2026}

\begin{document}

\begin{abstract}
Conjecture 3 in  \cite{AldroubiCabrelliKrishtalMolter2026} postulates that for any bounded normal operator $T$ on a Hilbert
space $H$ and any vector $g\in H$ the system
\[
  \left\{\frac{T^k g}{\|T^k g\|}: k=0,1,2,\ldots\right\}
\]
is not a frame. It was motivated by \cite{AldroubiCabrelliCakmakMolterPetrosyan2017}, where it was established that such frames do not exist when $T$ is a self adjoint operator.  We show, however, that this conjecture is false by presenting a construction of $H$, $T$, and $g$ such that the normalized orbit considered  is indeed a frame. The operator is diagonal and is defined via a decomposition of the space into finite
blocks rapidly increasing in size. We also provide an $\epsilon$-perturbation $S$ of the operator $T$ such that the system 
\[
  \left\{{S^k g}: k=0,1,2,\ldots\right\}
\]
is a Carleson frame in the sense of \cite{AldroubiCabrelliMolterTang2017,  ChristensenHasannasabPhilippStoeva2024Mystery}.
The constructions were achieved using ChatGPT, whose assistance  was also  employed in the preparation of this manuscript.

\end{abstract}

\maketitle

\section{Introduction and results}

Frames generated by operator orbits are a central model in dynamical sampling; see
\cite{AldroubiCabrelliMolterTang2017,AldroubiCabrelliKrishtalMolter2026} for the
basic dynamical-sampling viewpoint and background survey. As such, they have generated considerable interest and appear in a large body of recent research (see \cite[and references therein]{AldroubiCabrelliKrishtalMolter2026}). Despite their
 rigidity, normal-operator orbits are of particular interest, as they can be analyzed with a richer variety of tools and are potentially better implementable in applications \cite{MST26}. The problem solved in this manuscript originates in the work of  Aldroubi, Cabrelli, \c{C}akmak, Molter, and Petrosyan \cite{AldroubiCabrelliCakmakMolterPetrosyan2017}, which is a prime example of an analysis of
iterative actions of normal operators. We also note the work of 
Philipp, who studied Bessel orbits of normal operators in \cite{Philipp2017}, as well as the 
finite-generator and finite-index research which appears in
\cite{CabrelliMolterPaternostroPhilipp2020,CabrelliMolterSuarez2021}.

The normalized-orbit question is closely related to the broader problem of
rescaling a sequence of vectors so that it becomes a frame.  For this point of view see
Yu's work on frame-normalizable sequences \cite{Yu2024FrameNormalizable}.  A
recent preprint of Yu \cite{Yu2025Spectrum} studies spectral restrictions for
normal operators whose iterative systems can be rescaled to frames.  The present
example is diagonal and has pure point spectrum inside the open unit disk, with
accumulation on the unit circle.  Thus it is designed to avoid the continuous
spectrum obstructions emphasized in \cite{Yu2025Spectrum}.

For the unnormalized diagonal-contraction version of the problem, the standard
Hardy-space model brings in Carleson interpolation and normalized reproducing
kernels.  The classical references are Carleson's interpolation theorem
\cite{Carleson1958} and the Shapiro--Shields characterization of interpolating
sequences for Hardy spaces \cite{ShapiroShields1961}; useful book references
include Garnett \cite{Garnett2007}, Nikolski \cite{Nikolski2002}, and Seip
\cite{Seip2004}.  For the reproducing-kernel and sampling/interpolation point
of view, see also
\cite{AlemanHartzMcCarthyRichter2019, BaranovDyakonov2011, FuhrGrochenigHaimiKlotzRomero2017,GroechenigHaimiOrtegaCerdaRomero2019, LataPaulsen2011}.
The terminology and recent operator-orbit use of ``Carleson frames'' may be
compared with
\cite{ChristensenHasannasabPhilippStoeva2024Mystery, GallardoGutierrezPartington2026, KrishtalMiller2026Demystifying,KrishtalMiller2025Kadec}.
Closely related diagonal-system formulations also appear in observability and
controllability through vector-valued interpolation; see
\cite{JacobPartingtonPott2007,JacobZwart2001}.  We also use the standard frame and
Riesz-basis facts;  convenient general references are Christensen's \cite {Christensen2016} and Heil's 
\cite{Heil2011} texts.

The main result proved in the construction section is the following.

\begin{theorem}\label{thm:main}
There exists a bounded invertible normal operator $T$ on $\ell^2(\mathbb N)$ and a vector
$g\in\ell^2(\mathbb N)$ such that
\[
  \left\{\frac{T^k g}{\|T^k g\|}:k=0,1,2,\ldots\right\}
\]
is a frame for $\ell^2(\mathbb N)$.  %
\end{theorem}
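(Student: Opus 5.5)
We take $T$ diagonal with respect to an orthogonal decomposition $\ell^2(\mathbb N)=\bigoplus_{j\ge1}H_j$ into finite blocks, $\dim H_j=N_j$, with $N_j$ increasing very rapidly. On $H_j$ (orthonormal basis $e_{j,1},\dots,e_{j,N_j}$) the operator acts by $Te_{j,m}=r_j\omega_j^m e_{j,m}$ with $\omega_j=e^{2\pi i/N_j}$ and radii $r_j\in(0,1)$, $r_j\uparrow1$. Then $T$ is normal, bounded by $1$, and invertible because $r_j\ge r_1>0$. Put $g=\sum_j c_j N_j^{-1/2}\sum_m e_{j,m}$ with $c_j>0$ and $\sum c_j^2<\infty$. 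The idea is that the orbit is partitioned into consecutive time windows $I_j=[K_j,K_j+N_j)$. On $I_j$ the $j$-th block dominates $T^kg$, so the normalized vectors $T^kg/\|T^kg\|$, $k\in I_j$, are small perturbations of $\omega_j^{km}$-type Fourier vectors in $H_j$. Up to phases these vectors form the unitary DFT basis of $H_j$, which is a tight frame for $H_j$ with bound $1$.

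\textbf{Step 1 (dominance).} We have $\|P_{H_l}T^kg\|=c_l r_l^k$. We choose the parameters inductively so that for $k\in I_j$ the block $j$ dominates:
\[
\sum_{l\ne j}c_l^2r_l^{2k}\le \delta_j^2c_j^2r_j^{2k},
\]
with $\sum_j\delta_j$ small. For $l<j$ this holds once $K_j$ is large, since $r_l<r_j$ gives $(r_l/r_j)^{K_j}\to0$. For $l>j$ we need $c_l r_l^k\ll c_j r_j^k$ for all $k\le K_j+N_j$. This is arranged by choosing $c_{l}$ tiny relative to $c_j r_j^{K_j+N_j}$ when $l$ is introduced, and by choosing $K_{j+1}$ large enough that block $j+1$, although it carries a tiny coefficient, overtakes the earlier blocks because $r_{j+1}>r_j$. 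The window must also fit, so we need $r_j^{N_j}\approx1$ on it, for example $1-r_j\ll 1/N_j$. This keeps the vectors on $I_j$ comparable to the exact DFT vectors in $H_j$.

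\textbf{Step 2 (frame bounds).} Write $u_k=T^kg/\|T^kg\|$ and $u_k=v_k+w_k$, where $v_k=P_{H_j}u_k$ for $k\in I_j$. Then $\{v_k/\|v_k\|\}_{k\in I_j}$ is, up to unimodular phases, an orthonormal basis of $H_j$. The lower bound therefore comes from the collection over all $j$ of these orthonormal bases, which is an orthonormal basis of $\ell^2$. We estimate the error operator by $\sum_{k\in I_j}|\langle f,w_k\rangle|^2\le N_j\delta_j^2\|f\|^2$. We therefore require $N_j\delta_j^2$ summable and small, and this strengthens the dominance requirement to $\delta_j\ll N_j^{-1/2}$.

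The remaining indices $k$ lying outside every window contribute extra vectors. These are harmless for the lower bound. For the upper bound, each transition region must contain boundedly many $k$ whose vectors are not well controlled, or those vectors must be nearly aligned with a DFT basis. The cleanest choice is to take $K_{j+1}=K_j+N_j$, so that the windows tile $\mathbb N_0$. The Bessel bound then follows from orthonormality plus the summable error.

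\textbf{Main obstacle.} The hard part is the compatibility of the windows. Block $j$ must dominate for the whole window of length $N_j$ and then be overtaken exactly at its end, which forces $(r_{j+1}/r_j)^{N_j}$ times the coefficient ratio to jump from $\ll N_j^{-1/2}$ to $\gg N_{j+1}^{1/2}$ within a short transition. Meanwhile $r_j^{N_j}$ must stay near $1$, so the blocks have nearly equal norms within each window. My first attempt is to allow short transition zones of bounded length in which a fixed finite number of vectors are uncontrolled, and to absorb them into the Bessel bound. I would take $1-r_j\approx N_j^{-2}$ and make $N_{j+1}$ superexponential in $N_j$, so that $(r_{j+1}/r_j)^k$ grows fast enough after time $K_j+N_j$. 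Failing that, I would drop exact tiling and show directly that the vectors in the gaps form a Bessel family, using that they concentrate on just two adjacent blocks.
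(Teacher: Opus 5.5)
Your architecture is the paper's: diagonal blocks with $N_j$-th roots of unity, constant seed vectors with block weights, a near-orthonormal subsequence on windows where one block dominates, and a separate Bessel argument. Your lower-bound step is essentially the paper's Paley--Wiener perturbation argument. The genuine gap is the parameter regime you commit to. The condition $r_j^{N_j}\approx 1$, i.e.\ $1-r_j\ll 1/N_j$ (concretely $1-r_j\approx N_j^{-2}$), is both unnecessary and fatal.

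It is unnecessary because $P_{H_j}T^kg=c_jr_j^k\,v_{j,k}$ for every $k$ and every $r_j$, where $v_{j,k}=N_j^{-1/2}\sum_m\omega_j^{mk}e_{j,m}$ is a unit DFT vector. After normalization only the relative weights $p_{j,k}=c_j^2r_j^{2k}/\sum_l c_l^2r_l^{2k}$ matter.

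It is fatal because it destroys the upper frame bound. Since all $r_l\le 1$, one has $p_{j,k+1}\ge r_j^2p_{j,k}$. Take $k_0\in I_j$, so $p_{j,k_0}\ge 1/2$, and $f=v_{j,k_0}\in H_j$. Then
\[
\sum_{k\ge0}|\langle f,u_k\rangle|^2\ \ge\ \sum_{q\ge0}p_{j,k_0+qN_j}\ \ge\ \frac{1}{2\,(1-r_j^{2N_j})},
\]
which is of order $N_j$ when $1-r_j\approx N_j^{-2}$. Each DFT direction of block $j$ is revisited every $N_j$ steps with almost undiminished weight for very many periods. No treatment of the transition zones can repair this: a uniform Bessel bound forces $r_j^{2N_j}$ to stay bounded away from $1$, the exact opposite of your requirement.

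Your first attempt (tiling, or transitions of bounded length) is also impossible for any bounded $T$ of this type. The logarithm of the energy ratio of blocks $j+1$ and $j$ is affine in $k$ with slope $2\log(r_{j+1}/r_j)$. These slopes are summable, since they telescope to at most $2\log(1/r_1)$. Yet the ratio must climb from $\le\delta_j^2$ at the end of window $j$ to $\ge\delta_{j+1}^{-2}$ at the start of window $j+1$, a jump that tends to infinity. So the gaps must have unbounded length; in your regime they are at least of order $N_j^2\log N_{j+1}$. Your fallback is therefore forced, but the observation that gap vectors ``concentrate on two adjacent blocks'' gives only half of a Bessel bound.

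The missing ingredient is the opposite regime. The paper takes $N_m=Q^{2m}$ and $r_m^2=e^{-\eta_m}$ with $\eta_m-\eta_{m+1}=Q^{-m}$. Then the log-ratio of neighbouring blocks changes by $Q^{-m}N_m=Q^m$ over a single period of block $m$. The affine log-energies $\ell_m(t)=\beta_m-\eta_mt$ cross at $\tau_m$, and block $m$ owns an interval $I_m$ of length $3N_m$. Dominance $p_{m,k}\ge 1-3e^{-Q^m}$ is claimed only on the middle third $J_m$, which is small enough against $N_m=Q^{2m}$ for the perturbation sum. The Bessel bound then comes from a Schur test on the Gram matrix using two estimates:
\begin{itemize}
\item $\sup_k\sum_m\sqrt{p_{m,k}}\le 4$, which is your ``few blocks at a time'';
\item the residue-class estimate $\sup_{m,j}\sum_{q}\sqrt{p_{m,j+qN_m}}\le 7$, which is exactly what fails when $r_m^{N_m}\approx 1$.
\end{itemize}
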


The construction forces the normalized orbit to spend a long interval
of time almost entirely inside one block, where it runs through a Fourier basis.
A subsequence is therefore a small perturbation of an orthonormal basis, while a
Schur-test estimate gives the Bessel bound for the whole sequence. This construction illustrates that the orbits of normal operators provide enough flexibility for frame generation, unlike those of self-adjoint operators, which lack the possibility of spectral rotation.   

The penultimate section of this manuscript records a related perturbation result: an arbitrarily small
diagonal normal perturbation of the operator constructed in Theorem \ref{thm:main} produces an unnormalized
operator-orbit Carleson frame. %

\section{Construction of the  frame orbit }\label{sec:construction}

In a nutshell, the idea of the construction is as follows. We define
\begin{align*}
    H=\bigoplus_{m=1}^\infty H_m, \quad T=  \bigoplus_{m=1}^\infty e^{-\eta_m/2}U_m, 
    \quad g=\bigoplus_{m=1}^\infty e^{\beta_m/2}g_m,
\end{align*}
where $H_m$ is finite dimensional, $U_m$ is unitary on $H_m$ and $g_m\in H_m$. Then
\[
  T^k g
  =\bigoplus_{m=1}^\infty e^{(\beta_m-\eta_mk)/2} U_m^{k}\, g_m.
\]
The key to the proof is to design $g_m$ and $U_m$ so that $U_m^kg_m$ contains all elements of a basis of $H_m$ for $k$ in a discrete time interval $J_m$, and to pick $\beta_m$ and $\eta_m$ so that $e^{(\beta_m-\eta_mk)/2}$ is large for $k\in J_m$ and close to 0 for $k\in J_n$ with $n\neq m$. The set of the $T^kg/\|T^kg\|$  with $k\in\bigcup_{m\in\mathbb N}J_m$ is then shown to be a Riesz basis of $H$ and establishing a Bessel bound for all $T^kg/\|T^kg\|$ completes the proof.

The construction of $H$, $T$, and $g$ and the proof of Theorem~\ref{thm:main} are elementary in nature, but also quite intricate and non trivial.  To aid the reader, we include a table of all the constants and mathematical objects that are introduced below.

\begin{longtable}{@{}p{0.12\textwidth}p{0.35\textwidth}p{0.44\textwidth}@{}}
\caption{Constants and notation used in the construction.}\label{tab:constants}\\
\toprule
Symbol & Definition & Role \\
\midrule
\endfirsthead
\toprule
Symbol & Definition & Role \\
\midrule
\endhead
\bottomrule
\endfoot
$Q$ & $100$ & Fixed separation parameter. \\
$N_m$ & $Q^{2m}$ & Dimension of the $m$-th block $H_m$ of $H$. \\
$L_m$ & $3N_m$ & Time  interval length assigned to block $m$. \\
$S_m$ & $\sum_{q=1}^m L_q$, with $S_0=0$ & Right endpoints of the integer intervals. \\
$\tau_m$ & $S_m-\frac12$ & Intersection point of the lines $\ell_m$ and $\ell_{m+1}$. \\
$\eta_m$ & $\sum_{q=m}^\infty Q^{-q}=\frac{Q^{1-m}}{Q-1}$ & Decay exponent for the $m$-th spectral radius. \\
$r_m$ & $e^{-\eta_m/2}$ &  $m$-th spectral radius. \\
$\omega_m$ & $e^{2\pi i/N_m}$ & Primitive $N_m$-th root of unity. \\
$U_m$ & $U_m e_{m,j}=\omega_m^j e_{m,j}$ & Unitary  operator on the $m$-th block. \\
$T$ & $T=\bigoplus_m r_m U_m$ & Constructed bounded normal operator. \\
$\beta_m$ & $\beta_1=0$, $\displaystyle \beta_m=-\sum_{r=1}^{m-1}Q^{-r}\tau_r$ & Exponent in the 
weight of $g$ in  block $m$. \\
$g_m$ & $N_m^{-1/2}\sum_{j=0}^{N_m-1}e_{m,j}$ & Normalized constant vector in $H_m$. \\
$g$ & $\bigoplus_m e^{\beta_m/2}\,g_m$ & Seed vector for the orbit. \\
$\ell_m(t)$ & $\beta_m-\eta_m t$ & Affine line controlling the energy at time $t$ within the $m$-th block at time $t$. \\
$I_m$ & $\{S_{m-1},S_{m-1}+1,\ldots,S_m-1\}$ & Time interval that $\ell_m$ dominates. \\
$J_m$ & $\{S_{m-1}+N_m,\ldots,S_{m-1}+2N_m-1\}$ & Middle third of $I_m$. \\ 
$p_{m,k}$ & 
$\displaystyle
\frac{e^{\ell_m(k)}}{\sum_{n=1}^\infty e^{\ell_n(k)}}$ & Energy of $k$-th sequence vector  in block $m$. \\
$\varepsilon_0$ & $6\sum_{m\ge1}Q^{2m}e^{-Q^m}<10^{-38}$ & Perturbation-size bound for the Riesz-basis subsequence. 
\\
$\rho_m$ &  $r_m$ small $m$ and $\sqrt{1-e^{\beta_m}/N_m}$ on the tail & Perturbed radius used for the unnormalized Carleson-frame orbit. \\
$S$ & $S e_{m,j}=\rho_m\omega_m^j e_{m,j}$ & Perturbed diagonal normal operator. \\
$\lambda_{m,j}$ & $\rho_m\omega_m^j$ & Eigenvalues of the perturbed operator $S$. \\
\end{longtable}

\subsection{Construction of the Hilbert space and of the bounded normal operator}\label{subsec:definition}

We set   $Q=100$ and $N_m=Q^{2m}$ for $m\ge 1$ as described in   Table~\ref{tab:constants}.  As Hilbert space 
we choose
\[
  H=\bigoplus_{m=1}^\infty H_m, \qquad H_m=\mathbb C^{N_m},
\]
with  inner product and norm induced by the summands.
The space $H$ is naturally isomorphic to $\ell^2(\mathbb N)$.
Denoting the standard euclidean basis of $H_m$ by $\{e_{m,j}:0\le j<N_m\}$, we define the diagonal and  unitary operator $U_m$ on $H_m$ by setting 
\[U_m e_{m,j}=\omega_m^j e_{m,j},
   \qquad \mbox{where} \qquad \omega_m=e^{2\pi i/N_m}
\]
is the canonical $N_m$-th principle root of  unity.
With
\(
  \eta_m=\sum_{q=m}^\infty Q^{-q}=\frac{Q^{1-m}}{Q-1}
 \) and \( r_m=e^{-\eta_m/2},
\)
 we finally define  
$T: H\to H$ as
\[
  T=\bigoplus_{m=1}^\infty r_m U_m,\ \text{ that is, } \quad T e_{m,j}=r_m\omega_m^j e_{m,j},   \ m\in\mathbb N,\ j = 0, 1, \ldots, N_m{-}1.
\]
Note that $T$ is diagonal and, therefore, normal. In addition, $T$ is bounded since
\(
  \|T\|=\sup_{m\ge 1}r_m=\lim_{m\to \infty}r_m=1,
\)
and $T$ is invertible since the eigenvalues of $T$ are contained in $[r_1,1)$ with $r_1>0$ since $0<r_1<r_2< \ldots <1$.

\subsection{Construction of the orbit generating vector}

We now choose the vector $g$. To this end, we set $L_m=3N_m$, 
$S_m=\sum_{q=1}^m L_q$ with $S_0=0$ and 
$\tau_m=S_m{-}\frac12$.
Further, we let 
\[
  \beta_1=0\quad \mbox{and}\quad\beta_{m+1}=\beta_m-Q^{-m}\tau_m=-\sum_{r=1}^{m}Q^{-r}\tau_r,\qquad m\ge 1,
\]
so that
\[\beta_m
  \leq -\sum_{r=1}^{m-1}Q^{-r}Q^{2r}
  =-\sum_{r=1}^{m-1}Q^r
  =-\frac{Q^m-Q}{Q-1}  \to -\infty \text{ as }  m\to \infty.
\]
Finally, we set 
\[
  g=\bigoplus_{m=1}^\infty e^{\beta_m/2}\,g_m, \quad \mbox{where}\quad g_m=\frac1{\sqrt{N_m}}\sum_{j=0}^{N_m-1}e_{m,j}, \quad m\geq 1,
\]
and observe that $g\in H$ since 
\ \(\displaystyle 
  \|g\|^2=\sum_{m=1}^\infty e^{\beta_m}\le\sum_{m=1}^\infty e^{-\frac{Q^m-Q}{Q-1}}<2<\infty.
\)

\subsection{The orbit and its normalization}\label{subsec:normalized-formula}

For $k\ge 0$, define
\[
  v_{m,k}=U_m^k g_m=\frac1{\sqrt{N_m}}\sum_{j=0}^{N_m-1}\omega_m^{jk}e_{m,j}\in \mathbb C^{N_m},
\]
that is, \(
  \{v_{m,0},v_{m,1},\ldots,v_{m,N_m-1}\}
\)
is the  Fourier orthonormal basis of $H_m$. Note that 
for fixed $m$, the vector $v_{m,k}$ depends only on $k$ modulo $N_m$, and any set of $N_m$ consecutive values of $k$ lead to the same Fourier basis. More generally,
\[
  \langle v_{m,k},v_{m,\ell}\rangle
  =
  \begin{cases}
    1, & k\equiv \ell \pmod {N_m},\\
    0, & k\not\equiv \ell \pmod {N_m}.
  \end{cases}
\]

With this notation and recalling  $r_m^{2k}=e^{-\eta_m k}$, we obtain
\[
  T^k g
  =\bigoplus_{m=1}^\infty e^{\beta_m/2}\,r_m^k v_{m,k}=\bigoplus_{m=1}^\infty e^{\beta_m/2}\,e^{-\eta_m k/2} v_{m,k}=\bigoplus_{m=1}^\infty e^{(\beta_m-\eta_mk)/2} v_{m,k}=\bigoplus_{m=1}^\infty e^{\ell_m(k)/2} v_{m,k},
\]
where the crucial affine linear maps
\[
  \ell_m(t)=\beta_m-\eta_m t, \ m\geq 1,
\]
are illustrated in Figure~\ref{fig:first-five-lines}.
Since
\(\displaystyle
  \|T^k g\|^2
  =\sum_{m=1}^\infty e^{\ell_m(k)},
\)
the energy of the $m$-th summand appearing in $T^kg/\|T^k g\|$ is 
\begin{equation}
\label{pmk}
  p_{m,k}
  =\frac{e^{\ell_m(k)}}{\sum_{n=1}^\infty e^{\ell_n(k)}}.
\end{equation}
Clearly $p_{m,k}>0$ and $\sum_m p_{m,k}=1$. In summary, the elements of the normalized orbit  are  given by
\[
  x_k:=\frac{T^k g}{\|T^k g\|}
  =\bigoplus_{m=1}^\infty \sqrt{p_{m,k}}\,v_{m,k}.
\]

\begin{figure}
\centering
\begin{tikzpicture}[x=1.55cm,y=0.18cm,>=stealth,font=\small]
  \def\xmin{-0.22}
  \def\xmax{5.42}
  \def\ymin{-32.5}
  \def\ymax{1.8}
  \def\Jthreeleft{2.333333333}
  \def\Jthreeright{2.666666667}

  \fill[cyan!10] (\Jthreeleft,\ymin) rectangle (\Jthreeright,\ymax);
  \draw[gray!40] (\xmin,\ymin) rectangle (\xmax,\ymax);
  \foreach \yy in {-30,-25,-20,-15,-10,-5,0} {
    \draw[gray!18] (\xmin,\yy) -- (\xmax,\yy);
  }
  \foreach \xx in {1,2,3,4} {
    \draw[gray!55,densely dashed] (\xx,\ymin) -- (\xx,\ymax);
  }
  \draw[cyan!50!black,densely dotted] (\Jthreeleft,\ymin) -- (\Jthreeleft,\ymax);
  \draw[cyan!50!black,densely dotted] (\Jthreeright,\ymin) -- (\Jthreeright,\ymax);

  \draw[->,thick] (\xmin,0) -- (6.10,0) node[anchor=west] {$t$};
  \draw[->,thick] (0,\ymin) -- (0,2.65) node[anchor=south] { };

  \begin{scope}
    \clip (\xmin,\ymin) rectangle (\xmax,\ymax);
    \draw[very thick,blue!70!black]   plot[domain=0:5.42,samples=2] (\x,{  0-10*\x});
    \draw[very thick,orange!85!black] plot[domain=0:5.42,samples=2] (\x,{ -2- 8*\x});
    \draw[very thick,green!55!black]  plot[domain=0:5.42,samples=2] (\x,{ -6- 6*\x});
    \draw[very thick,red!75!black]    plot[domain=0:5.42,samples=2] (\x,{-12- 4*\x});
    \draw[very thick,purple!75!black] plot[domain=0:5.42,samples=2] (\x,{-20- 2*\x});
  \end{scope}

  \foreach \xx/\lab in {1/{\tau_1},2/{\tau_2},3/{\tau_3},4/{\tau_4}} {
    \draw[thick] (\xx,0.55) -- (\xx,-0.55);
    \node[anchor=south,yshift=3pt,fill=white,inner sep=1pt] at (\xx,0) {$\lab$};
  }
  \fill[black] (1,-10) circle (1.8pt);
  \fill[black] (2,-18) circle (1.8pt);
  \fill[black] (3,-24) circle (1.8pt);
  \fill[black] (4,-28) circle (1.8pt);

  \draw[cyan!70!black,line width=1.2pt] (\Jthreeleft,-1.45) -- (\Jthreeright,-1.45);
  \draw[cyan!70!black,line width=1.2pt] (\Jthreeleft,-0.95) -- (\Jthreeleft,-1.95);
  \draw[cyan!70!black,line width=1.2pt] (\Jthreeright,-0.95) -- (\Jthreeright,-1.95);
  \node[anchor=north,fill=white,inner sep=1.5pt,text=cyan!55!black]
    at (2.5,-2.15) {$J_3$};

  \draw[thick] (-0.06,0) -- (0.06,0);
  \node[anchor=east,xshift=-3pt,text=blue!70!black,fill=white,inner sep=1pt] at (0,0) {$\beta_1$};
  \draw[thick] (-0.06,-2) -- (0.06,-2);
  \node[anchor=east,xshift=-3pt,text=orange!85!black,fill=white,inner sep=1pt] at (0,-2) {$\beta_2$};
  \draw[thick] (-0.06,-6) -- (0.06,-6);
  \node[anchor=east,xshift=-3pt,text=green!55!black,fill=white,inner sep=1pt] at (0,-6) {$\beta_3$};
  \draw[thick] (-0.06,-12) -- (0.06,-12);
  \node[anchor=east,xshift=-3pt,text=red!75!black,fill=white,inner sep=1pt] at (0,-12) {$\beta_4$};
  \draw[thick] (-0.06,-20) -- (0.06,-20);
  \node[anchor=east,xshift=-3pt,text=purple!75!black,fill=white,inner sep=1pt] at (0,-20) {$\beta_5$};

  \draw[very thick,blue!70!black]   (5.72,-5.0) -- (6.06,-5.0);
  \node[anchor=west] at (6.12,-5.0) {$\ell_1$};
  \draw[very thick,orange!85!black] (5.72,-8.0) -- (6.06,-8.0);
  \node[anchor=west] at (6.12,-8.0) {$\ell_2$};
  \draw[very thick,green!55!black]  (5.72,-11.0) -- (6.06,-11.0);
  \node[anchor=west] at (6.12,-11.0) {$\ell_3$};
  \draw[very thick,red!75!black]    (5.72,-14.0) -- (6.06,-14.0);
  \node[anchor=west] at (6.12,-14.0) {$\ell_4$};
  \draw[very thick,purple!75!black] (5.72,-17.0) -- (6.06,-17.0);
  \node[anchor=west] at (6.12,-17.0) {$\ell_5$};

  \node[anchor=north] at (2.65,-34.1)
    {compressed $t$-axis};
\end{tikzpicture}%
\caption{Schematic of the first five affine functions $\ell_m(t)$.  The labels $\tau_1,\ldots,\tau_4$ are placed on the $t$-axis, while the intercepts $\beta_i=\ell_i(0)$ are marked on the vertical axis.  The highlighted interval $J_3$ is the schematic middle third between $\tau_2$ and $\tau_3$. 
The actual values of $\tau_1$, $\tau_2$, $\tau_3$,  $\tau_4$, $\ldots$ grow too rapidly for a good illustration, so the $t$-axis is compressed and the $\tau_m$
are placed equidistantly.%
}
\label{fig:first-five-lines}
\end{figure}

\begin{figure}[H]
\centering
\begin{tikzpicture}[x=1.55cm,y=4.3cm,>=stealth,font=\small
]
  \def\xmin{-0.22}
  \def\xmax{5.42}
  \def\Jthreeleft{2.333333333}
  \def\Jthreeright{2.666666667}

  \fill[cyan!10] (\Jthreeleft,0) rectangle (\Jthreeright,1.08);
  \draw[gray!40] (\xmin,0) rectangle (\xmax,1.08);
  \foreach \yy in {0.25,0.50,0.75,1.00} {
    \draw[gray!16] (\xmin,\yy) -- (\xmax,\yy);
  }
  \foreach \xx in {1,2,3,4} {
    \draw[gray!55,densely dashed] (\xx,0) -- (\xx,1.08);
  }
  \draw[cyan!50!black,densely dotted] (\Jthreeleft,0) -- (\Jthreeleft,1.08);
  \draw[cyan!50!black,densely dotted] (\Jthreeright,0) -- (\Jthreeright,1.08);

  \draw[->,thick] (\xmin,0) -- (6.10,0) node[anchor=west] {$t$};
  \draw[->,thick] (0,0) -- (0,1.18) node[anchor=south] {energy};
  \draw[thick] (-0.05,1) -- (0.05,1);
  \node[anchor=east,xshift=-3pt] at (0,1) {$1$};
  \node[anchor=east,xshift=-3pt] at (0,0) {$0$};

  \draw[very thick,blue!70!black]
    plot[smooth] coordinates {(0,0.99) (0.35,0.98) (0.70,0.92) (1.00,0.50) (1.30,0.08) (1.70,0.015) (5.40,0.005)};
  \draw[very thick,orange!85!black]
    plot[smooth] coordinates {(0,0.01) (0.70,0.06) (1.00,0.50) (1.30,0.92) (1.70,0.96) (2.00,0.50) (2.30,0.08) (2.70,0.015) (5.40,0.005)};
  \draw[very thick,green!55!black]
    plot[smooth] coordinates {(0,0.005) (1.70,0.015) (2.00,0.50) (2.22,0.91) (2.36,0.985) (2.64,0.985) (2.78,0.91) (3.00,0.50) (3.30,0.08) (3.70,0.015) (5.40,0.005)};
  \draw[very thick,red!75!black]
    plot[smooth] coordinates {(0,0.005) (2.70,0.015) (3.00,0.50) (3.30,0.92) (3.70,0.96) (4.00,0.50) (4.30,0.08) (4.70,0.015) (5.40,0.005)};
  \draw[very thick,purple!75!black]
    plot[smooth] coordinates {(0,0.005) (3.70,0.015) (4.00,0.50) (4.30,0.92) (4.70,0.98) (5.40,0.99)};

  \foreach \xx/\lab in {1/{\tau_1},2/{\tau_2},3/{\tau_3},4/{\tau_4}} {
    \draw[thick] (\xx,0.018) -- (\xx,-0.018);
    \node[anchor=north,fill=white,inner sep=1pt] at (\xx,-0.02) {$\lab$};
  }

  \draw[cyan!70!black,line width=1.2pt] (\Jthreeleft,-0.09) -- (\Jthreeright,-0.09);
  \draw[cyan!70!black,line width=1.2pt] (\Jthreeleft,-0.055) -- (\Jthreeleft,-0.125);
  \draw[cyan!70!black,line width=1.2pt] (\Jthreeright,-0.055) -- (\Jthreeright,-0.125);
  \node[anchor=north,fill=white,inner sep=1.5pt,text=cyan!55!black]
    at (2.5,-0.14) {$J_3$};

  \draw[very thick,blue!70!black]   (5.72,0.88) -- (6.06,0.88);
  \node[anchor=west] at (6.12,0.88) {$p_{1,k}$};
  \draw[very thick,orange!85!black] (5.72,0.74) -- (6.06,0.74);
  \node[anchor=west] at (6.12,0.74) {$p_{2,k}$};
  \draw[very thick,green!55!black]  (5.72,0.60) -- (6.06,0.60);
  \node[anchor=west] at (6.12,0.60) {$p_{3,k}$};
  \draw[very thick,red!75!black]    (5.72,0.46) -- (6.06,0.46);
  \node[anchor=west] at (6.12,0.46) {$p_{4,k}$};
  \draw[very thick,purple!75!black] (5.72,0.32) -- (6.06,0.32);
  \node[anchor=west] at (6.12,0.32) {$p_{5,k}$};

  \node[anchor=north] at (2.65,-0.24)
    {compressed $t$ axis};
\end{tikzpicture}%
\caption{Schematic behavior of the block weights $p_{m,k}$.  The weights are the soft-max weights associated with the affine functions $\ell_m(k)$: on the middle third $J_3$, the third affine function dominates strongly, so $p_{3,k}$ is close to $1$ and the other weights are close to $0$. Note that schematic weights are displayed; the actual transition regions are much sharper for $Q=100$.}
\label{fig:weights}
\end{figure}

Figure~\ref{fig:weights} illustrates the  behavior of the weights $p_{m,k}$.  It
is the soft-max version of the affine-line picture in Figure \ref{fig:first-five-lines}: when one line is well above
all the others, the corresponding weight is close to one, and the other weights
are close to zero.  The transition between dominant blocks occurs near the
crossing points $\tau_m$ that are defined below. The crossings are avoided by focusing on the middle thirds, e.g., the interval $J_3\subseteq I_3$ that is displayed in the figures.

\subsection{Analysis of the weights and their dominance in middle thirds}\label{subsec:dominance}

Crucial for the understanding of the weights $p_{m,k}$ are the affine linear functions 
\(
  \ell_m(t)=\beta_m-\eta_m t
\) defined above. 
Note that the crossing points of these lines are  exactly at the prescribed numbers $\tau_m=S_m{-}\frac 12$.  Indeed, we have
\[\ell_{m+1}(\tau_m) =\beta_{m+1}-\eta_{m+1}\tau_m =
\beta_{m}-Q^{-m}\tau_m -\eta_{m+1}\tau_m
=
\beta_{m}-\eta_{m}\tau_m
=\ell_m(\tau_m).
\]
The slopes are $-\eta_m$, and because $\eta_1>\eta_2>\eta_3> \ldots>0$, the lines become
flatter as $m$ increases. Thus, the upper envelope passes from $\ell_1$ to
$\ell_2$ at $\tau_1$, from $\ell_2$ to $\ell_3$ at $\tau_2$, and so on.
This geometry is illustrated schematically in Figure \ref{fig:first-five-lines}.  The actual values for $Q=100$ are too far apart to make a useful literal drawing, so the horizontal axis is compressed and
successive crossing points $\tau_1$, $\tau_2$, $\tau_3$, and $\tau_4$
are placed equidistantly.  In this compressed drawing, the interval $J_3$ is
shown as the middle third of the interval between $\tau_2$ and
$\tau_3$.

Since 
\[
  S_{m}{-}1<\tau_{m}=S_{m}{-}\frac 1 2<S_m, \quad m\geq 1,
\]
the line $\ell_m$ dominates all other lines exactly on $I_m =\{S_{m-1},S_{m-1}{+}1,\ldots,S_m{-}1\}$. The middle third of $I_m$ is denoted by 
$J_m=\{S_{m-1}+N_m,\ldots,S_{m-1}+2N_m-1\}$, all points in $J_m$ are therefore at least at distance $N_m{+}\frac 1 2$ from the nearest crossing points $\tau_m$ and $\tau_{m+1}$.
These  distance estimates are the source of the exponentially small error
terms in Lemma~\ref{lem:dominance} below.

\begin{lemma}\label{lem:dominance}
For every $m\ge 1$ and every $k\in J_m$ we have \ 
\(\displaystyle 
  p_{m,k}\ge 1-3e^{-Q^m}.
\)
\end{lemma}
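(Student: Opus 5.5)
We plan to bound the complementary mass directly:
\[
  1-p_{m,k}=\frac{\sum_{n\neq m}e^{\ell_n(k)}}{\sum_n e^{\ell_n(k)}}\le \sum_{n\ne m}e^{\ell_n(k)-\ell_m(k)}.
\]
It then suffices to show that $\sum_{n\neq m}e^{\ell_n(k)-\ell_m(k)}\le 3e^{-Q^m}$ for $k\in J_m$. The key tool is to write each difference as a telescoping sum over consecutive crossings. We use $\ell_{q+1}(t)-\ell_q(t)=(\eta_q-\eta_{q+1})(t-\tau_q)=Q^{-q}(t-\tau_q)$, which follows from the crossing identity and $\eta_q-\eta_{q+1}=Q^{-q}$.

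\emph{Lines with $n>m$.} Write $\ell_n(k)-\ell_m(k)=\sum_{q=m}^{n-1}Q^{-q}(k-\tau_q)$. Since $k\in J_m$, we have $k\le S_{m-1}+2N_m-1$, so $k-\tau_q\le -(\tau_q-S_{m-1}-2N_m+1)$ for every $q\ge m$. For $q=m$ this gives $\tau_m-k\ge N_m+\tfrac12$, hence the $q=m$ term is at most $-Q^{-m}(N_m+\tfrac12)\le -Q^m$. For $q>m$, $\tau_q-k\ge S_q-S_m\ge 3N_q$, so the term is at most $-3Q^{q}$. Therefore $\ell_n(k)-\ell_m(k)\le -Q^m-3\sum_{q=m+1}^{n-1}Q^q$. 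Summing over $n>m$, we get at most $e^{-Q^m}\bigl(1+\sum_{n\ge m+2}e^{-3Q^{n-1}}\bigr)\le e^{-Q^m}(1+\text{tiny})$.

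\emph{Lines with $n<m$.} Write $\ell_n(k)-\ell_m(k)=-\sum_{q=n}^{m-1}Q^{-q}(k-\tau_q)$. For $q=m-1$, $k-\tau_{m-1}\ge N_m+\tfrac12$, so this term contributes at most $-Q^{-(m-1)}N_m=-Q^{m+1}$, which is even better than $-Q^m$. For $q<m-1$, $k-\tau_q\ge k-\tau_{m-1}>0$, so every term is negative. Hence $\ell_n(k)-\ell_m(k)\le -Q^{m+1}-\sum_{q=n}^{m-2}Q^{-q}N_m$. The extra terms make the sum over $n<m$ geometric-type: the $n=m-1$ term is at most $e^{-Q^{m+1}}$, and each further $n$ adds at least $Q^{-q}N_m\ge Q^{2}$ in the exponent. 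The total is therefore at most $2e^{-Q^{m+1}}\le e^{-Q^m}$.

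Adding the two parts gives $1-p_{m,k}\le 3e^{-Q^m}$. The main obstacle is the bookkeeping in the $n>m$ case: we must check that the distance from $k$ to $\tau_q$ for $q>m$ is large enough relative to $Q^{q}$ that the tail sum is negligible. This reduces to $S_q-S_m\ge L_q=3Q^{2q}$, which is immediate. The constant $3$ leaves ample slack, so crude bounds suffice throughout, and no precise values of $\beta_m$ are needed beyond the telescoping identity.
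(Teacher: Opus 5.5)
Your proposal is correct and follows essentially the same route as the paper. The shared steps are the bound $1-p_{m,k}\le\sum_{n\ne m}e^{\ell_n(k)-\ell_m(k)}$, the telescoping via $\ell_{q+1}(t)-\ell_q(t)=Q^{-q}(t-\tau_q)$, and the distance estimates yielding exponents $Q^m$, $3Q^q$ and $Q^{m+1}$. The only minor difference is in the left tail, where you use geometric decay in $n$ and the paper uses the cruder count $(m-1)e^{-Q^{m+1}}\le e^{-Q^m}$; either suffices.
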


\begin{proof}
We have
\begin{align*}
1- p_{m,k}&=1-\frac{e^{\ell_m(k)}}{\sum_{n=1}^\infty e^{\ell_n(k)}}=1-\frac{1}{1+\sum_{n\neq m} e^{\ell_n(k)-\ell_m(k)}}
=
 \frac{\sum_{n\neq m} e^{\ell_n(k)-\ell_m(k)}}{1+\sum_{n\neq m} e^{\ell_n(k)-\ell_m(k)}}\\ &\leq   \sum_{n\neq m} e^{\ell_n(k)-\ell_m(k)},
\end{align*}
so it suffices to show
\[R_{m,k}=  \sum_{n\neq m} e^{\ell_n(k)-\ell_m(k)}\leq 3e^{-Q^m}.\]

Recalling the adjacent-line identity 
\(
  \ell_m(\tau_m)=\ell_{m+1}(\tau_m)
\)
and observing that
\(
  \ell_m(t)-\ell_{m+1}(t)
\)
is affine with slope
\(
  -\eta_m-(-\eta_{m+1})=-(\eta_m-\eta_{m+1})=-Q^{-m},
\)
we have, for every real $t$,
\begin{equation}\label{lmdif}
  \ell_m(t)-\ell_{m+1}(t)=Q^{-m}(\tau_m-t).
\end{equation}

Let $k\in J_m$. We first consider the case $n=m+s$ with $s\ge1$.
Using 
\eqref{lmdif} repeatedly in a telescoping sum gives
\[
\begin{aligned}
  \ell_m(k)-\ell_{m+s}(k)
  &=\sum_{r=m}^{m+s-1}\bigl(\ell_r(k)-\ell_{r+1}(k)\bigr)  
&=\sum_{r=m}^{m+s-1}Q^{-r}\left(\tau_r-k\right).
\end{aligned}
\]
Since 
all points in $J_m$ are  at least at distance $N_m{+}\frac 1 2$ from $\tau_{m-1}$ and  $\tau_{m}$,  the first summand satisfies
\[
  Q^{-m}\left(\tau_m-k\right)
  \ge Q^{-m}N_m
  =Q^{-m}Q^{2m}
  =Q^m.
\]
For $r\ge m+1$, we  have
\[
  \tau_r-k\ge \tau_r-\tau_m=S_r-S_m
  =\sum_{a=m+1}^r L_a
  \ge L_r=3N_r=3Q^{2r}.
\]
Thus,
\[
  Q^{-r}\left(\tau_r-k\right)
  \ge Q^{-r}3Q^{2r}=3Q^r.
\]
Using these estimates, we obtain
\[
  \ell_m(k)-\ell_{m+s}(k)
  \ge Q^m+3\sum_{r=m+1}^{m+s-1}Q^r
  \ge Q^m+3(s-1)Q^{m+1}, 
\]
and  
\begin{align*}
  \sum_{n>m}e^{\ell_n(k)-\ell_m(k)}
  &\le e^{-Q^m}\sum_{s=1}^\infty e^{-3(s-1)Q^{m+1}} 
  =\frac{e^{-Q^m}}{1-e^{-3Q^{m+1}}}
  \le 2e^{-Q^m},
\end{align*}
where we simply used that $Q=100$ implies
$e^{-3Q^{m+1}}<1/2$.

Next, we estimate the left-hand tail $n<m$, again using \eqref{lmdif}.  For $m=1$ the left hand tail  is empty.  For
$m\ge2$ and $n<m$ we have
\[
\begin{aligned}
  \ell_m(k)-\ell_n(k)
  &=\sum_{r=n}^{m-1}\bigl(\ell_{r+1}(k)-\ell_r(k)\bigr) 
  =\sum_{r=n}^{m-1}Q^{-r}\left(k-\tau_r \right)
  \ge Q^{-(m-1)}\left(k-\tau_{m-1}\right)\\
  &\ge Q^{-(m-1)}N_m
  =Q^{-(m-1)}Q^{2m}
  =Q^{m+1}.
  \end{aligned}
\]
Therefore
\[
  \sum_{n<m}e^{\ell_n(k)-\ell_m(k)}
  \le (m-1)e^{-Q^{m+1}}
  \le e^{-Q^m}.
\]
The final inequality holds   trivially for $m=1$
and for $m\ge2$ it follows from 
\[
(m-1)e^{-Q^{m+1}}e^{Q^m}=(m-1)
e^{-Q^m(Q-1)}<(m-1)
e^{-Q^m}<1
\]
since $Q$ is large.

Combining the two tails yields
\(\displaystyle 
  R_{m,k}
  =\sum_{n\ne m}e^{\ell_n(k)-\ell_m(k)}
  \le 3e^{-Q^m}.
\)
\end{proof}

\subsection{A Riesz-basis subsequence}\label{subsec:riesz-subsequence}

For $k\in J_m$, let $y_k \in H$ be the vector $v_{m,k}$ placed in the $m$-th block
and zero in every other block.  Because $J_m$ consists of exactly $N_m$
consecutive integers, every residue of $k$ modulo $N_m$ appears exactly once. Hence,
\[
  \{y_k\in H:\ k\in J_m\}
\]
is an orthonormal basis for $H_m$ and, therefore,
\[
  \mathcal Y:=\{y_k\in H:\ k\in J:=\bigcup_{m=1}^\infty J_m\}
\]
is an orthonormal basis for $H$.

Clearly, for any $k\in J_m$, we have
\[
  \langle x_k,y_k\rangle=\sqrt{p_{m,k}},
\]
and, therefore, Lemma~\ref{lem:dominance} implies
\[
\begin{aligned}
  \|x_k-y_k\|^2
  &=\|x_k\|^2+\|y_k\|^2-2\operatorname{Re}\langle x_k,y_k\rangle 
  =2(1-\sqrt{p_{m,k}})
  \le 2(1-p_{m,k})
  \le 6e^{-Q^m}.
\end{aligned}
\]
 Thus,
\[
  \sum_{k\in J}\|x_k-y_k\|^2
  \le 6\sum_{m=1}^\infty N_m e^{-Q^m}
  =6\sum_{m=1}^\infty Q^{2m}e^{-Q^m}
  =:\varepsilon_0.
\]
For $Q=100$, $\varepsilon_0<10^{-38}<1$. Hence, the standard frame-perturbation criterion  implies that  $\{x_k\}_{k\in J}$ is a Riesz basis for $H$ with lower frame bound $A_J=(1-\sqrt{\varepsilon_0})^2$ (see \cite[Corollary 22.1.5 and Exercise 22.4]{Christensen2016}). Indeed,
 for any finitely supported sequence $\{c_k\}_{k\in J}$, we have
\[
\left\|\sum_{k\in J} c_k(x_k-y_k)\right\|
\le\sum_{k\in J} |c_k|^2\left\|(x_k-y_k)\right\|^2
\le\left(\sum_{k\in J} |c_k|^2\right)^{\frac 1 2}\!\!
\left(\sum_{k\in J} \left\|  x_k-y_k\right\|^2\right)^{\frac 1 2}\!\!\le \epsilon_0
\left(\sum_{k\in J} |c_k|^2\right)^{\frac 1 2}\!\!.
\]
This criterion goes back to Paley and Wiener 
\cite[p.~100]{PaleyWiener1934};
see also \cite[Theorem~13, p.~35]{Young2001}, \cite{CC3} and \cite{CH}.

\subsection{The Bessel bound for the frame sequence}\label{subsec:bessel-bound}

It remains to check that the full sequence $\{x_k:k\ge 0\}$ has a finite upper
frame bound.  

\begin{lemma}\label{lem:bessel-estimates}
For $Q=100$, the following two estimates hold:
\begin{equation}
    \label{sumest}
  \sup_{k\ge0}\sum_{m=1}^\infty \sqrt{p_{m,k}}\le 4
\end{equation}
and
\begin{equation}
    \label{progest}
  \sup_{m\ge1}\sup_{0\le j<N_m}
  \sum_{q\ge0}\sqrt{p_{m,j+qN_m}}\le 7.
\end{equation}
\end{lemma}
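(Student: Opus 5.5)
Both estimates come from the bound $p_{n,k}\le e^{\ell_n(k)-\ell_m(k)}$, valid for every $n$ whenever $m$ is the dominant index at $k$, i.e.\ $k\in I_m$. This bound follows from \eqref{pmk}, since the denominator is at least $e^{\ell_m(k)}$. We also use the telescoped differences from the proof of Lemma~\ref{lem:dominance}, which come from \eqref{lmdif}:
\[
\ell_m(k)-\ell_{m+s}(k)=\sum_{r=m}^{m+s-1}Q^{-r}(\tau_r-k),\qquad
\ell_m(k)-\ell_{n}(k)=\sum_{r=n}^{m-1}Q^{-r}(k-\tau_r)\quad(n<m).
\]
For $k\in I_m$ every summand is nonnegative, because $\tau_{m-1}<k<\tau_m$.

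\textbf{Proof of \eqref{sumest}.} Fix $k$ and the unique $m$ with $k\in I_m$. We bound $p_{m,k}\le1$, and we also bound the neighbours $p_{m\pm1,k}\le 1$, since $k$ may sit right next to a crossing point. For $n=m+s$ with $s\ge2$, we drop all summands except $r=m+1,\dots,m+s-1$. Each of these satisfies $\tau_r-k\ge \tau_r-\tau_m\ge 3Q^{2r}$, so
\[
\ell_m(k)-\ell_{m+s}(k)\ge 3\sum_{r=m+1}^{m+s-1}Q^{r}\ge 3(s-1)Q^{m+1}.
\]
Hence $\sqrt{p_{m+s,k}}\le e^{-\frac32(s-1)Q^{m+1}}$, and this tail sums to something tiny. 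For $n=m-s$ with $s\ge2$, we keep only $r=n,\dots,m-2$. There $k-\tau_r\ge \tau_{m-1}-\tau_r\ge L_{m-1}=3Q^{2(m-1)}$, so the same geometric decay follows. Altogether we get $3+\text{(tiny)}\le4$.

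\textbf{Proof of \eqref{progest}.} Fix $m$ and $j$, and consider the times $k_q=j+qN_m$. Consecutive $k_q$ are spaced $N_m$ apart, and $I_m$ has length $3N_m$, so at most $3$ of them lie in $I_m$, and at most $3$ lie in $I_{m\pm1}$ near the ends of $I_m$. We bound these by $1$ each, which wastes some room. We handle them more carefully as follows. The progression points in $I_m$ contribute at most $3$. For $k\in I_{m+1}$, we have $\ell_{m+1}(k)-\ell_m(k)=Q^{-m}(k-\tau_m)$. Summing $e^{-\frac12Q^{-m}(k_q-\tau_m)}$ over the progression gives a geometric series with ratio $e^{-\frac12 Q^{-m}N_m}=e^{-Q^m/2}$, so the total is at most $1+$tiny. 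The symmetric estimate for $k\in I_{m-1}$ uses ratio $e^{-\frac12 Q^{-(m-1)}N_m}$, and likewise gives at most $1+$tiny. For $k\in I_{m'}$ with $|m'-m|\ge2$, the telescoped difference contains a full block-length term, for instance $Q^{-(m+1)}(\tau_{m+1}-\tau_m)\ge 3Q^{m+1}$ in the case $m'\ge m+2$. Adding the linear growth in $k$ makes $\sum_q$ over these $k$ a convergent geometric series with negligible total. This gives $3+1+1+\text{tiny}\le 7$; the budget of $7$ is roughly $3$ plus slack.

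\textbf{Main obstacle.} The main obstacle is the far-range sum in \eqref{progest} when $k\in I_{m'}$ with $m'\gg m$. There the per-step decay rate of $\ell_{m'}-\ell_m$ is only about $Q^{-m}$ per unit of $k$ on the relevant stretch, while the number of progression points in $I_{m'}$ is about $3Q^{2m'}/N_m$, which is huge. I would handle this first by writing $\ell_{m'}(k)-\ell_m(k)\ge Q^{-m}(k-\tau_m)\cdot c$ for a constant $c$, valid for all $k\ge\tau_m$. This follows since $\ell_m(k)-\ell_{m'}(k)$ is convex-piecewise in the sense that the slope difference $\eta_m-\eta_{m'}\ge Q^{-m}$ and the lines meet at most at $\tau$'s. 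The whole right tail $\sum_{q}e^{-\frac c2 Q^{-m}(k_q-\tau_m)}$ is then a single geometric series with ratio $e^{-cQ^m/2}$, bounded by $1+$tiny.
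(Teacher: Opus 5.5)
Your proposal is correct and follows essentially the paper's argument: at most three contributions bounded by $1$ (the dominant index and its two neighbours in \eqref{sumest}, the progression points in $I_m$ in \eqref{progest}), plus exponentially small remainders, with the two tails in \eqref{progest} summed as geometric series of ratios $e^{-Q^m/2}$ and $e^{-Q^{m+1}/2}$. Your ``main obstacle'' disappears if you compare with the fixed neighbouring line instead of the dominant one, as the paper does: $e^{\ell_{m+1}(k)}$ appears in the denominator of $p_{m,k}$ for every $k$, so $p_{m,k}\le e^{-Q^{-m}(k-\tau_m)}$ for all $k\ge S_m$ (your bound with $c=1$), and no convexity argument is needed (your phrasing of that argument is also inaccurate, since non-adjacent lines do not cross at the $\tau_r$; only the upper envelope has its breakpoints there).
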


\begin{proof} For a fixed $m$,
we begin with two elementary one-sided estimates.  If $k\ge S_m$, then the
term $e^{\ell_{m+1}(k)}$ appears in the denominator defining $p_{m,k}$ (see \eqref{pmk}), so we can use \eqref{lmdif} once more to obtain
\begin{equation}\label{foe}
     p_{m,k}
  \le e^{\ell_m(k)-\ell_{m+1}(k)}
  =e^{Q^{-m}\left(\tau_m-k\right)}
  =e^{-Q^{-m}(k-\tau_m)}.
\end{equation}
Similarly, if $m\ge2$ and $k<S_{m-1}$, then comparison with the previous line
gives
\begin{equation}\label{soe}
  p_{m,k}
  \le e^{\ell_m(k)-\ell_{m-1}(k)}
  =e^{-Q^{-(m-1)}(\tau_{m-1}-k)}.
\end{equation}
For $S_{m-1}\leq k<S_m$, that is, inside $I_m$,  we shall  use the trivial estimate $p_{m,k}\le1$.

We first prove \eqref{sumest}, i.e., a the uniform bound in $k$.  Fix $k\ge0$, and choose the unique
$M$ such that $k\in I_M$.  The three possible adjacent indices
$M-1,M,M+1$ contribute each at most 1, so in total at most $3$, to $\sum_m\sqrt{p_{m,k}}$.

If $m\le M-2$, then $k$ lies to the right of $I_m$, and in fact
$k\ge S_{m+1}$.  Hence,
\[
  k-\tau_m\ge S_{m+1}-S_m+\frac12=L_{m+1}+\frac12
  \ge 3Q^{2m+2}.
\]
Using the first one-sided estimate \eqref{foe},
\[
  \sqrt{p_{m,k}}
  \le e^ {-\frac12Q^{-m}3Q^{2m+2}}
  =e^{-\frac32Q^{m+2}}.
\]
If $m\ge M+2$, then $k$ lies to the left of $I_m$, and
$k\le S_M-1$. Thus,
\[
  \tau_{m-1}-k
  \ge \tau_{m-1}-S_M+1
  \ge L_{m-1}=3Q^{2m-2}.
\]
The second one-sided estimate \eqref{soe} gives
\[
  \sqrt{p_{m,k}}
  \le e^{-\frac12Q^{-(m-1)}3Q^{2m-2}}
  =e^ {-\frac32Q^{m-1}}.
\]
Therefore,
\[
\begin{aligned}
  \sum_{m=1}^\infty \sqrt{p_{m,k}}
  &\le 3
    +\sum_{m\le M-2}e^{-\frac32Q^{m+2}}
    +\sum_{m\ge M+2}e^{-\frac32Q^{m-1}}  
  \le 3+2\sum_{r=1}^\infty e^{-\frac32Q^r}
  <4,
\end{aligned}
\]
since $Q=100$, and the estimate \eqref{sumest} is established.

We now prove the arithmetic-progression estimate \eqref{progest}.  Fix $m$ and a residue
$0\le j<N_m$.  Consider the progression
\[
  j,\ j+N_m,\ j+2N_m,\ldots.
\]
It meets $I_m$, an interval of length $3N_m$, in at most three points.  These
points contribute to summands which add up to less than $3$.

For the part to the right of $I_m$, write its points as $k_0+rN_m$, where
$r=0,1,2,\ldots$, and $k_0\ge S_m$ is the first point in the progression to the
right of $I_m$. 
The first one-sided estimate \eqref{foe} gives
\[
  \sqrt{p_{m,k_0+rN_m}}
  \le e^{-\frac12Q^{-m}(k_0+rN_m-S_m+1/2)}
  \le e^{-rQ^m/2}.
\]
Thus, the right tail is bounded by
\[
  \sum_{r=0}^\infty e^{-rQ^m/2}
  =\frac1{1-e^{-Q^m/2}}
  \le 2.
\]

For the part to the left of $I_m$ there is nothing to prove when $m=1$.
Assume $m\ge2$.  Moving one step to the left in the same residue class changes
$k$ by $-N_m$, so the exponent
\[
  Q^{-(m-1)}(\tau_{m-1}-k)
\]
increases by
\[
  Q^{-(m-1)}N_m=Q^{m+1}.
\]
Using the second one-sided estimate \eqref{soe} in the same way as above, the left tail is
bounded by
\[
  \sum_{r=0}^\infty e^{-rQ^{m+1}/2}
  \le 2.
\]
Combining the inside part, the right tail, and the left tail gives
\[
  \sum_{q\ge0}\sqrt{p_{m,j+qN_m}}
  \le 3+2+2=7,
\]
and the lemma is proved.
\end{proof}

We now apply the Schur test \cite{Christensen2016} to the Gram matrix of the full normalized orbit.
Using the Fourier orthogonality formula in each block, we have
\[
  \langle v_{m,k},v_{m,\ell}\rangle
  =
  \begin{cases}
    1, & k\equiv \ell \pmod {N_m},\\
    0, & k\not\equiv \ell \pmod {N_m}.
  \end{cases}
\]
Therefore the Gram matrix entries of $\{x_k\}$ are
\[
\begin{aligned}
  G_{k\ell}
  &=\langle x_\ell,x_k\rangle 
  =\sum_{m=1}^\infty \sqrt{p_{m,k}p_{m,\ell}}\,
      \langle v_{m,\ell},v_{m,k}\rangle 
  =\sum_{m:\,k\equiv \ell\, (\mathrm{mod}\,N_m)}
      \sqrt{p_{m,k}p_{m,\ell}}.
\end{aligned}
\]
For each fixed $k$,
\[
\begin{aligned}
  \sum_{\ell=0}^\infty |G_{k\ell}|
  &\le \sum_{m=1}^\infty \sqrt{p_{m,k}}
      \sum_{\ell:\,\ell\equiv k\, (\mathrm{mod}\,N_m)}\sqrt{p_{m,\ell}}  
  &\le 4\cdot 7=28
\end{aligned}
\]
by Lemma \ref{lem:bessel-estimates}.
The same estimate holds for the column sums because $G$ is Hermitian.  By the
Schur test, $G$ defines a bounded operator on $\ell^2(\Nzero)$ with norm at
most $28$.  Equivalently, the sequence $\{x_k:k\ge0\}$ is Bessel with Bessel
bound at most $28$.

\subsection{Conclusion of the proof of Theorem~\ref{thm:main}}\label{subsec:main-proof}

The subsequence $\{x_k:k\in J\}$ is a Riesz basis.  Hence it satisfies a lower
frame estimate
\[
  A_J\|h\|^2\le \sum_{k\in J}|\langle h,x_k\rangle|^2,
  \qquad h\in H,
\]
with $A_J>0$.  Since $J\subset\Nzero$, the full sequence satisfies
\[
  A_J\|h\|^2
  \le \sum_{k\in J}|\langle h,x_k\rangle|^2
  \le \sum_{k=0}^\infty |\langle h,x_k\rangle|^2.
\]
Thus, the lower frame bound for the subsequence is also a lower frame bound for
the full sequence.  Lemma~\ref{lem:bessel-estimates} and the Schur test give the
upper frame bound as seen in Section~\ref{subsec:bessel-bound}.  Therefore,
\[
  \left\{\frac{T^k g}{\|T^k g\|}:k=0,1,2,\ldots\right\}
\]
is a frame for $H\cong\ell^2(\mathbb N)$. It can be easily seen from the construction that the system remains a frame if the first vector corresponding to $k=0$ is removed. 
This completes the proof of Theorem~\ref{thm:main}.

\section{Carleson-frame perturbation}\label{sec:carleson-perturbation}

The normalized orbit constructed above is a frame, while the unnormalized orbit
$\{T^k g:k\ge0\}$ is Bessel but not a frame, see Remark~\ref{Remark}.  There is nevertheless an
arbitrarily small diagonal normal perturbation of $T$ for which the
unnormalized orbit of the same vector $g$ is a frame.  This is exactly the
Hardy-space Carleson-kernel mechanism that was first observed in the context of dynamical sampling in \cite{AldroubiCabrelliMolterTang2017}.

Following \cite{AldroubiCabrelliMolterTang2017,  ChristensenHasannasabPhilippStoeva2024Mystery}, we define Carleson frames in the following way.

\begin{definition}
    A complex sequence $\{z_j\}_{j = 0}^\infty$ is a \emph{Carleson sequence} if $z_j \in \DD = \{z\in\CC: |z|<1\}$ and 
\begin{equation}\label{CCond}
\inf_{k} \prod_{j\neq k} \bigg|\frac{z_k - z_j}{1 - z_j\overline{z_k}} \bigg| 
> 0.    
\end{equation}
\end{definition}

\begin{definition}
    Let $\{u_j \}_{j = 0}^\infty$ be an orthonormal basis for a (separable) Hilbert space $\HH$, and assume that $S$ is an operator on $\HH$ satisfying $Su_j = z_ju_j$ for some Carleson sequence $\{z_j\}_{j =0}^\infty$. The system $\{S^{k}\varphi\}_{k = 0}^\infty$ forms a \emph{Carleson frame} if 
\begin{equation}\label{cyclic_vect}
\varphi = \sum_{j = 0}^\infty m_j\sqrt{1-|z_j|^2}\, u_j,
\end{equation}
where $\{m_j\}_{j = 0}^\infty$ is a complex sequence such that $ 0 < A \leq |m_j| \leq B$ for some $A, B \in\RR$.
\end{definition}

The fact that systems in the above definition are frames for $\HH$ was established in \cite[Theorem 3.16]{AldroubiCabrelliMolterTang2017}.

\begin{theorem}
Let $T$   and $g$ be as in Theorem 
\ref{thm:main}.
For every $\varepsilon>0$ there exists a bounded diagonal normal operator $S$ on
$H$ satisfying
\[
  \|S-T\|<\varepsilon
\]
and such that the unnormalized orbit
\[
  \{S^k g:k=0,1,2,\ldots\}
\]
is a Carleson frame for $H$. 
\end{theorem}

\begin{proof}
Choose $M$ so large that, for every $m>M$,
\[
  1-r_m<\frac{\varepsilon}{2}
  \qquad\text{and}\qquad
  1-\sqrt{1-\frac{e^{\beta_m}}{N_m}}<\frac{\varepsilon}{2}.
\]
This is possible because $r_m\to1$ and $e^{\beta_m}/N_m\to0$.  Define
\[
  \rho_m=
  \begin{cases}
    r_m, & m\le M,\\[4pt]
    \displaystyle\sqrt{1-\frac{e^{\beta_m}}{N_m}}, & m>M,
  \end{cases}
\]
and let
\[
  S e_{m,j}=\rho_m\omega_m^j e_{m,j}.
\]
Then $S$ is diagonal and normal.  Since $0<\rho_m<1$ and $\rho_m\to1$, we have
$\|S\|=1$.  Also
\[
  \|S-T\|
  =\sup_{m\ge1}|\rho_m-r_m|<\varepsilon,
\]
because the two operators agree on the first $M$ blocks and both radii are
within $\varepsilon/2$ of $1$ on the remaining blocks.

We will use \cite[Theorem 3.16]{AldroubiCabrelliMolterTang2017} to verify that the orbit 
$\{S^k g:k=0,1,2,\ldots\}$ is a Carleson frame. According to the definitions above, we need to verify  \eqref{CCond} for the eigenvalues $\lambda_{m,j}=\rho_m\omega_m^j$ and  \eqref{cyclic_vect} for the generating vector $g$.

We begin by noticing that the coefficients of $g$ have the correct  size
$\sqrt{1-|\lambda_{m,j}|^2}$ on the tail. Therfore, 
\eqref{cyclic_vect} holds.

Thus, it remains only to show that the eigenvalues  form a Carleson interpolating sequence
\cite{Carleson1958, Seip2004, ShapiroShields1961}, i.e., that for 
  $\lambda_{m,j}=\rho_m\omega_m^j$,
   $0\le j<N_m$, we must have
\begin{equation}
    \label{Cest}
  \inf_{m,j}
  \prod_{(n,\ell)\ne(m,j)}
  \left|
    \frac{\lambda_{m,j}-\lambda_{n,\ell}}
         {1-\overline{\lambda_{n,\ell}}\lambda_{m,j}}
  \right|>0.
\end{equation}

The set $\Lambda=\{\lambda_{m,j}=\rho_m \omega_m^j:0\le j<N_m,\ m\ge1\}$
is a concrete root-of-unity block version of the standard ``zeros on rapidly
approaching concentric circles'' constructions used for thin/interpolating
Blaschke products; compare with the geometric concentric-circle construction of
Gallardo-Gutiérrez and Gorkin \cite{GallardoGorkin2012} and the general
Naftalevič rotation theorem for producing interpolating sequences with
prescribed moduli \cite{AndreevMcNicholl2010}.

The Carleson product in \eqref{Cest} splits naturally into two pieces:
\[
\prod_{(n,\ell)\ne(m,j)}
\left|
\frac{\lambda_{m,j}-\lambda_{n,\ell}}
     {1-\overline{\lambda_{n,\ell}}\lambda_{m,j}}
\right|
=
A_m\prod_{n\ne m}B_{m,n,j},
\]
where \(A_m\) is the contribution from the other points on the same circle, and
\(B_{m,n,j}\) is the contribution from the whole \(n\)-th circle, $n\neq m$.

To estimate $A_m$ and $B_{m,n,j}$, we will use the standard cyclotomic polynomial identities such as
\[
    \prod_{\ell=0}^{N_n-1}(z-\rho_n\omega_n^\ell)=z^{N_n}-\rho_n^{N_n}, \quad z\in \CC.
\]

For points in the same block, we have
\[
\begin{aligned}
  A_m = \prod_{\ell\ne j}
  \left|
    \frac{\rho_m\omega_m^j-\rho_m\omega_m^\ell}
         {1-\rho_m^2\omega_m^{\ell-j}}
  \right|
  &=
  \frac{N_m\rho_m^{N_m-1}(1-\rho_m^2)}
       {1-\rho_m^{2N_m}},
\end{aligned}
\]
which follows from
\[
    \prod_{\ell\ne j}|\rho_m\omega_m^j-\rho_m\omega_m^\ell|
    =N_m\rho_m^{N_m-1}
\quad\mbox{and}\quad
    \prod_{\ell\ne j}|1-\rho_m^2\omega_m^{j-\ell}|
    =\frac{1-\rho_m^{2N_m}}{1-\rho_m^2}.
\]
Therefore, for $m>M$, we get
\[
  A_m = \frac{e^{\beta_m}\rho_m^{N_m-1}}
       {1-\left(1-e^{\beta_m}/N_m\right)^{N_m}},
\]
which tends to $1$, because $e^{\beta_m}\to0$, 
\[
  1-\left(1-\frac{e^{\beta_m}}{N_m}\right)^{N_m}
  \sim e^{\beta_m} \quad \mbox{and} \quad
    \rho_m^{N_m-1}
    =\left(1-\frac{e^{\beta_m}}{N_m}\right)^{(N_m-1)/2}\to 1.
\]
For $m\leq M$ we decrease the lower bound to absorb the finitely many initial blocks, establishing that
the same-block products $A_m$ are uniformly bounded below.

It remains to show that the cross-block products \(\prod\limits_{n\neq m}B_{m,n,j}\) are bounded below uniformly in $m$ and $j$. 
For a fixed
$\lambda\in\DD$ and an entire block of radius $\rho_n$, the cyclotomic polynomial identities
 give
\[
  \prod_{\ell=0}^{N_n-1}
  \left|
    \frac{\lambda-\rho_n\omega_n^\ell}
         {1-\overline{\lambda}\rho_n\omega_n^\ell}
  \right|
  =
  \left|
    \frac{\lambda^{N_n}-\rho_n^{N_n}}
         {1-(\overline{\lambda}\rho_n)^{N_n}}
  \right|.
\]
It follows that 
\[
    B_{m,n,j}
    =
    \left|
    \frac{\lambda_{m,j}^{N_n}-\rho_n^{N_n}}
         {1-(\rho_n\lambda_{m,j})^{N_n}}
    \right|
\]
so that the product over an entire circle becomes a
pseudohyperbolic-type expression involving \(N_n\)-th powers.

Since the phase can only increase the pseudohyperbolic distance\footnote{Indeed,
\[
\left|
\frac{ae^{i\theta}-b}{1-abe^{i\theta}}
\right|^2
=
\frac{a^2+b^2-2ab\cos\theta}
     {1+a^2b^2-2ab\cos\theta},\quad a,b\in(0,1),
\]
and this expression is minimized when \(\cos\theta=1\).}, we have
\[
    B_{m,n,j}
    \ge
    \frac{\left|\rho_{m}^{N_n}-\rho_n^{N_n}\right|}
         {1-(\rho_n\rho_{m})^{N_n}}.
\]
After an elementary computation\footnote{Writing
    $A=1-\rho_m^{N_n}$,
    $B=1-\rho_n^{N_n}$,
\(\Delta=\max\{A,B\}\), and \(\delta=\min\{A,B\}\),
 we have
\[ B_{m,n,j}
    \ge
    \frac{|A-B|}{A+B-AB}
    \ge
    \frac{\Delta-\delta}{\Delta+\delta}
    =
    \frac{1-\delta/\Delta}{1+\delta/\Delta}.
\]}, it follows that
\begin{equation}
    \label{defest}
    1-B_{m,n,j}
    \le
    2\frac{\min\left\{1-\rho_m^{N_n},1-\rho_n^{N_n}\right\}}{\max\left\{1-\rho_m^{N_n},1-\rho_n^{N_n}\right\}}.
\end{equation}
Thus, the cross-circle factor $B_{m,n,j}$ is close to \(1\) once the radial defects
\(1-\rho_m^{N_n}\) and \(1-\rho_n^{N_n}\) are very different.

We proceed to estimate the radial defects. We observe that 
\[
    1-\rho_m^{N_n}
    \asymp
    \min\left\{e^{\beta_m}Q^{2(n-m)},1\right\}
\quad\mbox{and}\quad
1-\rho_n^{N_n}
    \asymp e^{\beta_n}.    
\]

Since for $n<m$ we have that the smaller defect is
\(1-\rho_m^{N_n}\),  \eqref{defest} yields
\[
    1-B_{m,n,j}
    \lesssim
    e^{\beta_m-\beta_n}Q^{-2(m-n)}\le e^{-2Q^n}, \quad n<m.
\]
Since
\[
  \sum_{n=1}^{\infty}e^{-Q^n}<\infty,
\]
it follows that
\[
    \sum_{n< m}\bigl(1-B_{m,n,j}\bigr)
\]
 is bounded uniformly in \(m\).
 For the case $n>m$, we have
\[
    \sum_{n>m}1-B_{m,n,j}
    \lesssim \sum_{n>m}
    \frac{e^{\beta_n}}{\min\{e^{\beta_m}Q^{2(n-m)},1\}} \le \sum_{n=m+1}^\infty e^{\beta_n-\beta_m}Q^{2(m-n)}+\sum_{n=1}^\infty e^{\beta_n},
\]
which is again bounded uniformly in \(m\).

The previous estimates imply that
\begin{equation}\label{eq:sumest}
    \sum_{n\ne m}\bigl(1-B_{m,n,j}\bigr) \le C
\end{equation}
for some $C$ independent of \(m\) and \(j\). Observe that
\[
    \prod_n(1-\varepsilon_n)
    \ge
    \exp\left(-2\sum_n\varepsilon_n\right)
    \qquad
    \text{when }0\le\varepsilon_n\le\frac12.
\]
Letting $\varepsilon_n = 1 - B_{m,n,j}$
we conclude that there is a constant \(c_1>0\) such that
\[
    \prod_{n\ne m}B_{m,n,j}\ge c_1
    \qquad\text{for all }m,j,
\]
as \eqref{eq:sumest} implies that the number of terms $B_{m,n,j}$ which are smaller than $
\frac12$ is bounded above by $2C$, i.e., uniformly in $m$ and $j$. 
Combining this with the same-circle estimate for $A_m$ gives
\[
    \inf_{m,j}
    \prod_{(n,\ell)\ne(m,j)}
    \left|
    \frac{\lambda_{m,j}-\lambda_{n,\ell}}
         {1-\overline{\lambda_{n,\ell}}\lambda_{m,j}}
    \right|>0,
\]
which is the desired Carleson interpolation inequality.
\end{proof}

\begin{remark}\label{Remark}\rm 
The juxtaposition  of the unnormalized orbits of $T$ and $S$ is illuminating. For the original
operator $T$, the radial defect in the $m$-th block is
\[
  1-r_m^2=1-e^{-\eta_m}\asymp Q^{-m},
\]
whereas the squared coefficient of $g$ in each coordinate of the block is
\(
  {e^{\beta_m}}/{N_m}={e^{\beta_m}}/{2Q^m}.
\)
These quantities are not comparable; in fact
\[
  \frac{e^{\beta_m}/N_m}{1-r_m^2}\to0.
\]
The associated weighted  kernels therefore have no lower Riesz bound, so
$\{T^k g\}$ is Bessel but not a frame.  The perturbation replaces the radii by
$\rho_m$ so that
\(
  1-\rho_m^2={e^{\beta_m}}/{N_m}
\)
on the tail, exactly matching the correct normalization.
We illustrate the spectra of the generating operators $T$ and $S$ in the following picture. 
\end{remark}

\begin{tikzpicture}[
  scale=1,
  every node/.style={font=\small},
  axis/.style={->, line width=.45pt},
  unitcircle/.style={densely dashed, line width=.55pt},
  ring/.style={line width=.45pt},
  oldring/.style={densely dotted, line width=.35pt},
  specpt/.style={circle, fill=black, inner sep=1.05pt},
  moved/.style={-{Stealth[length=4pt,width=4pt]}, line width=.45pt}
]


\begin{scope}[shift={(-4.15,0)}]
  \node[font=\large] at (-1.5,2.35) {$\sigma(T)$};
  \node[align=center,font=\scriptsize,text width=5.2cm] at (0,2.68)
   {
   };

  \draw[axis] (-2.15,0) -- (2.2,0) node[right, font=\scriptsize] {$\operatorname{Re} z$};
  \draw[axis] (0,-2.15) -- (0,2.2) node[above, font=\scriptsize] {$\operatorname{Im} z$};

  \draw[unitcircle] (0,0) circle (1.85);
  \node[font=\scriptsize, anchor=south west] at (1.31,1.31) {$\mathbb T$};

  \foreach \r/\last/\den/\lab in {0.72/7/8/r_1,1.05/11/12/r_2,1.36/15/16/r_3,1.63/23/24/r_4}{
    \draw[ring] (0,0) circle (\r);
    \node[font=\scriptsize, anchor=west] at (0.08,\r+0.08) {$\lab$};
    \foreach \j in {0,...,\last}{
      \fill ({\r*cos(360*\j/\den)},{\r*sin(360*\j/\den)}) circle (1.05pt);
    }
  }

  \node[align=center,font=\scriptsize,text width=5.0cm] at (0,-2.78)
    {Solid circles: $T$ radii $r_m^2=e^{-Q^{1-m}/(Q-1)}$. 
    };
\end{scope}

\begin{scope}[shift={(4.15,0)}]
  \node[font=\large] at (-1.5,2.35) {$\sigma(S)$};
  \node[align=center,font=\scriptsize,text width=5.2cm] at (0,2.68)
  {
  };

  \draw[axis] (-2.15,0) -- (2.2,0) node[right, font=\scriptsize] {$\operatorname{Re} z$};
  \draw[axis] (0,-2.15) -- (0,2.2) node[above, font=\scriptsize] {$\operatorname{Im} z$};

  \draw[unitcircle] (0,0) circle (1.85);
  \node[font=\scriptsize, anchor=south west] at (1.31,1.31) {$\mathbb T$};

  \foreach \old in {0.72,1.05,1.36,1.63}{
    \draw[oldring] (0,0) circle (\old);
  }

  \foreach \r/\last/\den/\lab in {1.02/7/8/\rho_1,1.33/11/12/\rho_2,1.58/15/16/\rho_3,1.76/23/24/\rho_4}{
    \draw[ring] (0,0) circle (\r);
    \node[font=\scriptsize, anchor=west] at (0.08,\r+0.07) {$\lab$};
    \foreach \j in {0,...,\last}{
      \fill ({\r*cos(360*\j/\den)},{\r*sin(360*\j/\den)}) circle (1.05pt);
    }
  }

  \draw[moved] (0.72,0.18) -- (1.02,0.18);
  \draw[moved] (1.05,-0.18) -- (1.33,-0.18);
  \draw[moved] (1.36,0.38) -- (1.58,0.38);

  \node[font=\scriptsize, align=center] at (0,-2.92)
    {Dotted circles: old $T$ radii; solid circles: $S$ radii.};

\end{scope}

\node[align=center,font=\scriptsize,text width=2.8cm] at (0,1.15)
  {small normal perturbation \\$r_m\mapsto \rho_m$};

\node[align=center,font=\scriptsize,text width=11.7cm] at (0,-3.72)
  {Schematic, not to scale: for $Q=100$ the radii are all extremely close to the unit circle. 
  };
\end{tikzpicture}

\section{Acknowledgments}

We appreciate the helpful discussions with and the input by Andrei Caragea. It is important to acknowledge  that a ten-year-old open problem was resolved with a construction largely developed by an LLM, specifically ChatGPT. This is a testament to the remarkable potential of AI-assisted research in mathematics.

\end{document}